\numberwithin{equation}{section}
\theoremstyle{plain}
\newtheorem{theorem}{Theorem}[section]
\newtheorem{proposition}[theorem]{Proposition}
\theoremstyle{remark}
\newtheorem{remark}[theorem]{Remark}
\newtheorem*{ack}{Acknowledgement}
\def\sing{\operatorname{Sing}}
\def\hc{\operatorname{\widehat{\mathbb C}}}
\def\c{\operatorname{\mathbb C}}
\def\ud{\operatorname{\mathbb D}}
\def\h{\operatorname{\mathbb{H}}}
\def\f{\operatorname{\mathcal{F}}}
\def\sing{\operatorname{Sing}}
\begin{document}
\title{Siegel disks of the tangent family}
\author{Weiwei Cui and Hongming Nie}
\date{}

\maketitle

\begin{abstract}
We study Siegel disks in the dynamics of functions from the tangent family. In particular, we prove that a Siegel disk is unbounded if and only if the boundary of its image contains at least one asymptotic value. Moreover, by using quasiconformal surgery we also construct functions in the above family with bounded Siegel disks.

\medskip
\emph{Mathematics Subject Classification}: 30D05 (primary), 37F10 (secondary).

\medskip
\emph{Keywords}: Meromorphic functions, tangent family, Siegel disks, quasiconformal surgery.
\end{abstract}

\section{Introduction}\label{intro}

Let $f$ be a transcendental meromorphic function in the plane. The \textit{Fatou set} of $f$ is the set of points whose iterates are defined and form a normal family in the sense of Montel. Its complement on the sphere is the \textit{Julia set}. The Fatou set is open and each component is called a Fatou component. Periodic Fatou components fall into five possible categories: attracting domains, parabolic domains, Siegel disks, Herman rings and Baker domains. The last possibility never happens for rational functions, but may occur in the transcendental setting. For more details about the Fatou and Julia sets, we refer to \cite{bergweiler93} and \cite{Milnor06}.

The dynamical behaviours of a meromorphic function $f$ are, in some sense, determined by the iterative properties of its singular values. A \textit{singular value} of $f$ is a point near which at least one branch of the inverse $f^{-1}$ is not well defined. The set of singular values of $f$, denoted by $\sing(f^{-1})$, coincides with the set of critical values and asymptotic values. Recall here that a \emph{critical value} is the image of a \textit{critical point} which has vanishing derivative and an \emph{asymptotic value} is the limit of the image of a curve tending to infinity. Note that we are including here $\infty$ as a possible singular value. For detailed connections between various Fatou components and the singular set, see \cite{bergweiler93, bergweiler95, baranski} for instance.

On a Siegel disk of period $q\ge 1$, the iteration $f^q$ is conjugate to an irrational rotation. So Siegel disks contain no critical points. On their boundaries the forward orbits of singular values are dense. However, the interplay between the singular values and the boundaries of Siegel disks is not well understood and it has attracted a lot of interest in recent years (see for example \cite{Herman85,Rogers98,petersen,Zhang11,Cheritat16,Shishikura16} for rational functions and \cite{Rempe04,Rempe08,Zhang08,bf10,zakeri10,Benini18,Cheritat18} for transcendental entire functions).

In this short paper, we study Siegel disks in the tangent family
$$\f:=\left\{f_{\lambda}(z)=\lambda\tan z: \lambda\in\c\setminus\{0\}\right\}.$$
Each map $f_\lambda$ has exactly two asymptotic values $\pm\lambda i$ and no critical values. Moreover, the map $f_\lambda$ possesses neither Herman rings, Baker domains nor wandering domains, see \cite{Devaney89}, \cite[Corollary 4]{bergweiler93} and \cite[Proposition 5.10]{Keen97}. If $f_\lambda$ has a Siegel disk, the symmetry $f_\lambda(-z)=-f_\lambda(z)$ and the orbits of the asymptotic values imply that it has no other types of periodic Fatou components,  and hence each of its Fatou components is simply connected. 
For more dynamics of $f_\lambda$, we refer to \cite{Devaney88,Devaney89,Baker92,Keen97}. We mention here that the family $\f$ is a paradigm of the class in \cite{Fagella17} consisting of transcendental meromorphic functions with finitely many singular values for which $\infty$ is not an asymptotic value.

\smallskip
\noindent{\emph{Unbounded Siegel disks.}} An irrational number is of \textit{bounded type} if the coefficients in its continued fraction expansion are bounded. A result of Graczyk and \'Swiatek \cite{Graczyk03} asserts that if a holomorphic function has a Siegel disk which is properly contained in the domain of holomorphy and has bounded type rotation number, then the Siegel disk has a critical point on the boundary. It immediately implies that every Siegel disk of $f_{\lambda}$ with rotation number of bounded type is unbounded. Indeed, note that $f_\lambda$ is holomorphic away from its poles. Suppose that $f_{\lambda}$ had a bounded Siegel disk with rotation number of bounded type. Then this Siegel disk would be compactly contained in the complement of the poles and hence its boundary would contain a critical point. This is impossible since $f_{\lambda}$ has no critical points.

For unbounded Siegel disks of the tangent family, we prove the following.

\begin{theorem}\label{thm:unbounded}
Suppose that $f_\lambda\in\f$ has a Siegel disk $\Omega$. Then $\Omega$ is unbounded if and only if $\partial f_\lambda(\Omega)\cap\sing(f_\lambda^{-1})\neq\emptyset$.
\end{theorem}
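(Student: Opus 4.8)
The plan is to analyse the cycle $\Omega=\Omega_0\to\Omega_1\to\cdots\to\Omega_{q-1}\to\Omega_0$ of Siegel disks and to use the factorisation $f_\lambda=T\circ E$, where $E(z)=e^{2iz}$ and $T(u)=-i\lambda\,\frac{u-1}{u+1}$ is a Möbius transformation with $T(0)=\lambda i$, $T(\infty)=-\lambda i$ and $T(-1)=\infty$. I would first record three preliminary facts, writing $O^+(v)=\{f_\lambda^n(v):n\ge 0\}$. \emph{(a)} $\sing(f_\lambda^{-1})=\{\pm\lambda i\}$, these two values are omitted by $f_\lambda$ (as $\tan$ omits $\pm i$), and $\pm\lambda i\in\j(f_\lambda)$: if, say, $\lambda i$ were in the Fatou set, then since $f_\lambda$ has no wandering domains and (by the Introduction) no periodic Fatou components besides the Siegel cycle, the forward orbit of $\lambda i$ would eventually be absorbed into the cycle, so $\overline{O^+(\lambda i)}$ would lie in the Fatou set; together with $O^+(-\lambda i)=-O^+(\lambda i)$ this contradicts the standard inclusion $\partial\Omega_i\subseteq\overline{O^+(\lambda i)\cup O^+(-\lambda i)}$. \emph{(b)} $f_\lambda\colon\Omega_i\to\Omega_{i+1}$ is a conformal isomorphism: it is injective because $f_\lambda^{q}|_{\Omega_0}$, being conjugate to an irrational rotation, is injective; and it is known that $\Omega_{i+1}\setminus f_\lambda(\Omega_i)$ consists of asymptotic values, hence lies in $\{\pm\lambda i\}\cap\Omega_{i+1}=\emptyset$. \emph{(c)} $\Omega_0+\pi\neq\Omega_0$ — otherwise $f_\lambda^q(z)=f_\lambda^q(z+\pi)$ on $\Omega_0$ would again contradict injectivity — so $E$ is injective on $\Omega_0$; and for suitable $\delta,M$ the map $f_\lambda$ restricts to a universal covering of the punctured disk $D^*(\lambda i,\delta)$ by the half-plane $\{\operatorname{Im} z>M\}$ (and similarly for $-\lambda i$ and $\{\operatorname{Im} z<-M\}$).

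\emph{That $\Omega$ unbounded implies $\partial f_\lambda(\Omega)\cap\sing(f_\lambda^{-1})\neq\emptyset$.} Pick $z_n\in\Omega$ with $z_n\to\infty$. If $\operatorname{Im} z_n\to+\infty$ along a subsequence, then $f_\lambda(z_n)\to\lambda i$; since $f_\lambda(z_n)\in\Omega_1$ and $\lambda i\notin\Omega_1$ by \emph{(a)}, we get $\lambda i\in\partial\Omega_1$ and we are done; likewise if $\operatorname{Im} z_n\to-\infty$. So the only case to exclude is that $\operatorname{Im}$ is bounded on $\Omega$, i.e.\ $\Omega\subseteq\{a<\operatorname{Im} z<b\}$ with $\operatorname{Re}$ unbounded on $\Omega$. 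In that case, by \emph{(c)}, $\widetilde\Omega:=E(\Omega)$ is a simply connected domain contained in the round annulus $\{e^{-2b}<|u|<e^{-2a}\}\subseteq\c^*$ which winds infinitely around $0$ (because $\operatorname{Re}$, hence $\arg E$, is unbounded on $\Omega$), and $\widetilde g:=E\circ f_\lambda^{q}\circ E^{-1}$ is a conformal automorphism of $\widetilde\Omega$ conjugate to an irrational rotation $R_\theta$. I expect this to be the main obstacle; the route I would take is to exploit the minimality of $R_\theta$ on the circle of prime ends of $\widetilde\Omega$: as the spiral $\widetilde\Omega$ winds around $0$ it accumulates on a limiting compact subset of the annulus, the corresponding set of prime ends is closed and $\widetilde g$-invariant, and minimality would force \emph{every} prime end of $\widetilde\Omega$ to have impression inside that limiting set — which is impossible, since the prime ends near the image of the centre $E(z^*)$ have impressions disjoint from it. Making this rigorous (in particular controlling how $\widetilde g$ acts on impressions) is the delicate point; an alternative is to track the $\pi$-periodicity of $f_\lambda$ along the $\widetilde g$-invariant Jordan curves $E(\phi(\{|\zeta|=\rho\}))\subseteq\widetilde\Omega$ as $\rho\to1$, where $\phi\colon\ud\to\Omega$ is the linearising map, and to contradict injectivity of $f_\lambda^q|_\Omega$.

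\emph{That $\partial f_\lambda(\Omega)\cap\sing(f_\lambda^{-1})\neq\emptyset$ implies $\Omega$ unbounded.} By the symmetry $f_\lambda(-z)=-f_\lambda(z)$ we may assume $\lambda i\in\partial\Omega_1$. With $\delta,M$ as in \emph{(c)}, choose a connected component $V$ of $\Omega_1\cap D(\lambda i,\delta)$ with $\lambda i\in\partial V$; then $V$ is simply connected and, since $\lambda i\notin\Omega_1$, $V\subseteq D^*(\lambda i,\delta)$. Because $f_\lambda$ is a universal covering of $D^*(\lambda i,\delta)$ by $\{\operatorname{Im} z>M\}$, the set $f_\lambda^{-1}(V)\cap\{\operatorname{Im} z>M\}$ is a disjoint union of sheets $V'_k=V'_0+k\pi$, each mapped conformally onto $V$, and each $V'_k$ is unbounded: if $w_n\in V$ with $w_n\to\lambda i$, the preimages in $V'_k$ cannot stay bounded, as $\lambda i$ is omitted by $f_\lambda$. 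Finally, by \emph{(b)}, $(f_\lambda|_\Omega)^{-1}(V)$ is a nonempty connected open subset of $\Omega$ contained in $f_\lambda^{-1}(D(\lambda i,\delta))=\{\operatorname{Im} z>M\}$ and mapped conformally onto $V$, hence it coincides with one of the sheets $V'_k$; therefore $\Omega$ contains the unbounded set $V'_k$ and is unbounded. $\square$
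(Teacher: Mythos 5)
Your reduction of the hard direction is sound and matches the paper's starting point: if $\partial f_\lambda(\Omega)\cap\sing(f_\lambda^{-1})=\emptyset$ fails to hold only because $\Omega$ escapes vertically, the factorization $f_\lambda=T\circ E$ immediately puts an asymptotic value on $\partial f_\lambda(\Omega)$, so the only case left is $\Omega$ contained in a horizontal strip with unbounded real part (this is exactly the content of Proposition 2.1 in the paper). But that remaining case is the entire substance of the theorem, and your treatment of it is not a proof. The prime-end route has two unestablished steps that you yourself flag: (i) you need the set of prime ends of $\widetilde\Omega=E(\Omega)$ whose impressions lie \emph{inside} the limit set of the spiral to be nonempty, closed and invariant under the induced rotation, but a boundary point of the limit set only gives you a prime end whose impression \emph{meets} it; and (ii) invariance requires $\widetilde g$ to carry impressions to impressions, which does not follow from $\widetilde g$ being a conformal automorphism of $\widetilde\Omega$ conjugate to $R_\theta$ — that would need a continuous extension of $\widetilde g$ to $\partial\widetilde\Omega$, which is precisely what is unavailable here (and $\widetilde g=E\circ f_\lambda^q\circ E^{-1}$ is only defined on $\widetilde\Omega$ itself, since $E$ is injective only on $\Omega$). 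The paper closes this case by a concrete topological argument instead: it builds two cross-cuts $\Gamma_1,\Gamma_2$ of $\mathbb{C}\setminus(D_1\cup D_2)$ joining the two deleted disks and meeting nested invariant subdomains $W^{(1)}\subset W^{(2)}$ of $f_\lambda(\Omega)$ in controlled ways; their preimages are $\pi$-periodic families of cross-cuts $\alpha_j,\beta_j$ of the strip $S_R$, and the fact that $\Omega$ must cross $\alpha_0,\alpha_1,\alpha_2$ while $f_\lambda$ is injective on $\Omega$ forces $\beta_1$ both to meet and to miss $\Omega^{(2)}$ — a contradiction. Your second "alternative" (tracking $\pi$-periodicity along invariant curves to contradict injectivity) is the right idea and is essentially what the paper does, but as written it is a one-sentence intention, not an argument.

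Two smaller remarks. First, your proof of the direction "$\partial f_\lambda(\Omega)\cap\sing(f_\lambda^{-1})\neq\emptyset\Rightarrow\Omega$ unbounded" is correct but far heavier than necessary: since $\pm\lambda i$ are omitted values, if $\Omega$ were bounded then $f_\lambda(\overline{\Omega})$ would be a compact set containing $\overline{f_\lambda(\Omega)}\supseteq\partial f_\lambda(\Omega)$, so an asymptotic value on $\partial f_\lambda(\Omega)$ would be attained at a finite point of $\overline{\Omega}$ — immediate contradiction, no covering-space analysis needed. Second, in your step (a) the absorption of the orbit of $\lambda i$ into the Siegel cycle and the inclusion $\partial\Omega_i\subseteq\overline{O^+(\lambda i)\cup O^+(-\lambda i)}$ are invoked without proof; the conclusion $\pm\lambda i\in\mathcal{J}(f_\lambda)$ is true and the sketch can be made to work, but note that for the step where you actually use it ($\lambda i\in\overline{f_\lambda(\Omega)}\setminus f_\lambda(\Omega)$) the fact that $\lambda i$ is omitted already suffices, so (a) can be dispensed with there.
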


We remark here that there is no restriction on the period of the Siegel disk in our Theorem \ref{thm:unbounded}. 
To see the existence of Siegel disks of higher periods, consider the hyperbolic components (cf. ``shell components" in \cite{Fagella17}) in $\f$. Note that for any $p\ge 1$, there exist hyperbolic components, each element of which has exactly one attracting cycle of period $2p$, and hyperbolic components, each element of which has exactly two distinct symmetric attracting cycles of period $p$, see \cite[Section 8]{Keen97}. Let $\mathcal{H}$ be such a component. Then $\mathcal{H}$ is simply connected, see  \cite[Proposition 8.5]{Keen97}. Considering the multiplier map on $\mathcal{H}$, as functions in $\mathcal{H}$ move to $\partial\mathcal{H}$, we have that the attracting cycles degenerate to indifferent cycles.  Applying \cite[Theorem 6.13]{Fagella17}, we in fact obtain functions in $\partial\mathcal{H}$ possessing the aforementioned cycles whose multipliers are $e^{2\pi i\alpha}$ with Brjuno irrational numbers $\alpha$. Moreover, those cycles have the same period as the attracting cycles for functions in $\mathcal{H}$; for, otherwise, they would be parabolic. It follows immediately that such functions are linearizable at those cycles, and hence they have Siegel disks of corresponding periods.

One direction of Theorem \ref{thm:unbounded} follows easily from the fact that both asymptotic values are actually omitted: if $\partial f_\lambda(\Omega)\cap\sing(f_\lambda^{-1})\neq\emptyset$ and $\Omega$ is bounded, then there exists a finite point in $\partial\Omega$ whose image is an asymptotic value, which is impossible. The  reverse implication is similar to Rempe's result \cite{Rempe04} where he proved that an unbounded Siegel disk in the exponential family contains the finite asymptotic value on the boundary of its orbit.  To prove the above result, we suppose by contrary that $\partial f_\lambda(\Omega)\cap\sing(f_\lambda^{-1})=\emptyset$. Then consider two suitable disjoint closed disks around the two asymptotic values. The preimage of the complement of these disks is a horizontal strip containing the real axis. By constructing simple curves connecting  the boundaries of the two disks and considering their preimages, we can reach a contradiction due to the unboundedness of $\Omega$.

\medskip
A transcendental meromorphic function $f$ with two singular values is of the form $M\circ\exp\circ A$, where $M$ is M\"obius and $A$ is linear (equivalently, one can also say that $f$ is of the form $M\circ\tan\circ A$), see Proposition \ref{form}. Combining our method here with Rempe's method in \cite{Rempe04}, one can actually obtain the following theorem. Indeed, if $f$ has  two singular values, at least one of them is finite. Our method deals with the case that both singular values are finite, while Rempe's method works in the remaining case.

\begin{theorem}\label{Thm2}
Let $f:\c\to\hc$ be a transcendental meromorphic function with exactly two singular values. Suppose that $f$ has a Siegel disk $\Omega$ of period $q\ge 1$. Then $\bigcup_{j=0}^{q-1}f^j(\Omega)$ is unbounded if and only if $\left(\bigcup_{j=0}^{q-1}\partial f^j(\Omega)\right)\cap\sing(f^{-1})\neq\emptyset$.
\end{theorem}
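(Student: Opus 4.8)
The proof follows the structure already laid out in the paper. One implication is the easy topological direction: if the union of the boundaries meets $\sing(f^{-1})$ while the orbit $\bigcup_{j=0}^{q-1} f^j(\Omega)$ is bounded, then some $\partial f^j(\Omega)$ is a compact subset of $\c$ containing a point mapping (after finitely many steps within the cycle) to a singular value; but in the two-singular-value setting at least one singular value is an asymptotic value $a$, and asymptotic values of finite type are omitted, so no finite point can map to $a$ — contradiction. Here I would first invoke Proposition \ref{form} to normalize $f = M\circ\exp\circ A$, observe that the two singular values are $M(0)$ and $M(\infty)$, and split according to whether both are finite (the ``$\tan$-type'' case) or exactly one is infinite (the ``$\exp$-type'' case after another Möbius change). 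The omitted-value observation works uniformly.

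For the substantial direction — boundedness of the orbit forces the boundaries to avoid $\sing(f^{-1})$ — I would argue by contradiction: assume $\left(\bigcup_{j=0}^{q-1}\partial f^j(\Omega)\right)\cap\sing(f^{-1})=\emptyset$ and show the orbit is bounded. After conjugation we are in one of two normal forms. In the case where both singular values are finite, $f$ is Möbius-conjugate to a member of a tangent-type family, and I would transport the argument sketched for Theorem \ref{thm:unbounded} verbatim: take two small disjoint closed disks $D_1, D_2$ around the asymptotic values; by the avoidance hypothesis their closures are disjoint from all $\overline{f^j(\Omega)}$; the preimage $f^{-1}(\hc\setminus(D_1\cup D_2))$ is (conformally) a horizontal strip of bounded height containing a full preimage of each $f^j(\Omega)$; one then draws simple arcs joining $\partial D_1$ to $\partial D_2$ inside $\hc\setminus\bigcup_j \overline{f^j(\Omega)}$, lifts them under $f$, and uses that their lifts must stay in the bounded strip to conclude each $f^j(\Omega)$ is bounded. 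In the case where one singular value is $\infty$, $f$ is Möbius-conjugate to a linear reparametrization of the exponential, and here I would instead cite Rempe's argument from \cite{Rempe04}: an unbounded Siegel disk of an exponential-type map must carry the finite asymptotic value on its boundary orbit, which is exactly the contrapositive of what we want. The per-period bookkeeping (replacing $\Omega$ by the cyclic union $\bigcup_j f^j(\Omega)$, and noting $f^q$ rotates each $f^j(\Omega)$) is routine and does not interact with the geometric core.

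The main obstacle is making the strip/lifting argument in the first case robust enough to handle arbitrary period $q$ simultaneously and to accommodate the extra Möbius factor $M$: one must check that conjugating by $M$ does not destroy the key feature that $f^{-1}$ of the complement of two small disks around the asymptotic values is a bounded strip, and that the arcs connecting the two disks can genuinely be chosen to miss the (now possibly several) components $f^j(\Omega)$ — this requires the avoidance hypothesis together with the fact that each $f^j(\Omega)$ is simply connected and the asymptotic values lie in the complement of their closures, so the complement is connected and such arcs exist. A secondary point to get right is the reduction itself: verifying via Proposition \ref{form} that ``two singular values'' forces the $M\circ\exp\circ A$ form and that exactly the two stated cases arise, so that between the tangent-type treatment here and Rempe's exponential-type treatment in \cite{Rempe04} every possibility is covered.
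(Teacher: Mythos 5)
Your proposal is correct and follows essentially the same route as the paper, which itself only sketches the argument for Theorem \ref{Thm2}: reduce to the normal form $M\circ\exp\circ A$ via Proposition \ref{form}, dispose of the easy implication using that both singular values are omitted asymptotic values, and split the substantial implication into the case of two finite singular values (handled by the strip-and-lifting argument of Theorem \ref{thm:unbounded}) and the case where one singular value is $\infty$ (handled by Rempe's argument from \cite{Rempe04}). The obstacles you flag --- transporting the strip argument through the M\"obius factor $M$ and the period-$q$ bookkeeping --- are exactly the points the paper leaves implicit, and your treatment of them is consistent with the remark following the proof of Theorem \ref{thm:unbounded}.
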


\medskip
\noindent{\emph{Bounded Siegel disks.}} For the existence of bounded Siegel disks, we construct a map in $\mathcal{F}$ having a bounded Siegel disk with quasicircle boundary. Using quasiconformal surgery, we show

\begin{theorem}\label{thm:bounded}
There exists $\lambda\in\mathbb{S}^1$ such that $f_{\lambda}\in\f$ has a bounded Siegel disk $\Omega$ around $0$ with quasicircle boundary $\partial\Omega$ and $\partial\Omega\cap\sing(f_\lambda^{-1})=\emptyset$.
\end{theorem}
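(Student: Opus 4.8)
The plan is to perform a quasiconformal surgery starting from a Blaschke-product model that has the desired rotational behaviour on a unit disk, and then transplant this behaviour into a member of the tangent family. First I would fix a Brjuno (or bounded-type) rotation number $\alpha$ and consider the degree-three Blaschke product $B(z)=e^{2\pi i t}z^2\frac{z-3}{1-3z}$ (or a similar odd Blaschke model adapted to the symmetry $f_\lambda(-z)=-f_\lambda(z)$), choosing the parameter $t=t(\alpha)$ so that $B$ restricted to $\mathbb{S}^1$ is a critical circle map with rotation number $\alpha$. By Yoccoz's theorem, $B|_{\mathbb{S}^1}$ is then quasisymmetrically conjugate to the rigid rotation $R_\alpha$; let $h$ be such a conjugacy. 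Extend $h$ quasiconformally to $\overline{\ud}$ and use it to glue the rigid rotation on $\ud$ to $B$ on $\hc\setminus\overline{\ud}$, producing a quasiregular map $g$ of the sphere that agrees with $B$ outside $\ud$ and is conjugate to $R_\alpha$ on $\ud$. The $g$-invariant Beltrami coefficient $\mu$ (pulled back from the standard structure under iteration and supported off the grand orbit of $\ud$, with $\mu=0$ on $\ud$) is integrated by the Measurable Riemann Mapping Theorem to give a quasiconformal $\phi$; then $F=\phi\circ g\circ\phi^{-1}$ is holomorphic, has a fixed Siegel disk $\phi(\ud)$ of rotation number $\alpha$, and $\partial(\phi(\ud))=\phi(\mathbb{S}^1)$ is a quasicircle.

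The crucial step is to arrange that the resulting holomorphic map $F$ is actually (conformally conjugate to) a member $f_\lambda$ of the tangent family, rather than just some transcendental map with a Siegel disk. To force this I would build the surgery so that $F$ is a transcendental meromorphic map with exactly two asymptotic values and no critical values, symmetric under $z\mapsto -z$: by Proposition \ref{form} such a map has the form $M\circ\tan\circ A$ with $M$ Möbius and $A$ linear, and the oddness plus the normalization that $0$ is the Siegel fixed point pins down $M$ and $A$ up to the single scaling $\lambda$, placing $F$ in $\f$ after an affine change of coordinates. Concretely, the Blaschke model and the gluing must be chosen so that outside a large disk $g$ already coincides with a tangent map $z\mapsto\lambda\tan z$ (so the transcendental/asymptotic structure is present from the start and is not destroyed by surgery), while the Siegel behaviour is implanted near $0$; one then checks that $\phi$ can be taken to fix $0$ and to be asymptotically linear at $\infty$ so that the conjugated map is again of tangent type, and finally renormalizes so that $\lambda\in\mathbb{S}^1$ (which is the natural locus, since a fixed Siegel disk at $0$ forces $f_\lambda'(0)=\lambda=e^{2\pi i\alpha}$).

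Once $F=f_\lambda$ is identified, the remaining assertions are immediate: $\Omega=\phi(\ud)$ is bounded because $\phi(\mathbb{S}^1)$ is a quasicircle in $\c$ (the surgery is supported away from $\infty$), $\partial\Omega$ is a quasicircle by construction, and $\partial\Omega\cap\sing(f_\lambda^{-1})=\emptyset$ because $f_\lambda$ has no critical values and its two asymptotic values $\pm\lambda i$ lie on the forward orbit of $\partial f_\lambda(\Omega)$ only if $\Omega$ is unbounded by Theorem \ref{thm:unbounded} — since $\Omega$ is bounded, the contrapositive of that theorem gives $\partial f_\lambda(\Omega)\cap\sing(f_\lambda^{-1})=\emptyset$, and one checks $\partial\Omega$ itself (not merely $\partial f_\lambda(\Omega)$) misses the singular set. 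The main obstacle I anticipate is precisely the rigidity step: ensuring that the quasiconformal surgery outputs a map in the one-complex-parameter family $\f$ and not merely in the larger class of two-singular-value meromorphic maps; this requires carefully matching the model map to $\lambda\tan z$ at infinity and controlling the conjugating map $\phi$ well enough (using symmetry and the behaviour of the invariant Beltrami coefficient near $\infty$) to conclude that the normalized $F$ is genuinely $\lambda\tan z$.
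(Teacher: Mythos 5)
Your overall strategy (glue a rigid rotation into an invariant disk, straighten a $g$-invariant Beltrami coefficient, identify the output via Proposition \ref{form}) is the right genre of argument, but the specific model you chose breaks the proof in two places. First, the degree-three Blaschke product $B(z)=e^{2\pi it}z^2\frac{z-3}{1-3z}$ is a \emph{critical} circle map: it has a critical point on $\mathbb{S}^1$, and that critical point survives the surgery and lands on $\partial\Omega$. Since every $f_\lambda=\lambda\tan z$ has no critical points at all, no surgery built on this model can output a map of the tangent family; moreover a hybrid of a Blaschke product near $\ud$ with $\lambda\tan z$ near $\infty$ would have critical values, so Proposition \ref{form} (which needs exactly two singular values) would not even apply to it. The "crucial step" you flag --- matching the model to $\lambda\tan z$ at infinity --- is not a technicality to be checked but the actual content of the theorem, and your proposal does not supply it. The paper avoids the issue entirely by \emph{starting from a genuine tangent map}: take $\lambda_0=e^{2\pi i\theta_0}$ with $\theta_0$ of bounded type, so $f_{\lambda_0}$ has an (unbounded) Siegel disk $\Omega_0$, let $K$ be the image of $\overline{\ud}_r$ under the linearizer, and uniformize $\hc\setminus K$ by $\Phi:\hc\setminus\overline{\ud}\to\hc\setminus K$. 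The conjugated map $G=\Phi^{-1}\circ f_{\lambda_0}\circ\Phi$ restricts to a $C^\infty$ circle \emph{diffeomorphism} (no critical point), the surgery only modifies the dynamics inside a compact piece of the original Siegel disk, and the straightened map automatically inherits two asymptotic values, no critical values, oddness and a fixed point at $0$, which by Proposition \ref{form} forces it to be $b\tan(az)$, hence affinely conjugate to some $f_\lambda$ with $\lambda=ab\in\mathbb{S}^1$.

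Second, you never prove that $\phi(\ud)$ is the \emph{entire} Siegel disk of the straightened map; without that, $\partial\Omega$ need not be your quasicircle $\phi(\mathbb{S}^1)$. In the classical Blaschke surgery for quadratic polynomials this is exactly what the critical point on the boundary is for, and here that mechanism is unavailable. The paper replaces it by Herman's theorem on circle diffeomorphisms: one chooses $\lambda_1\in\mathbb{S}^1$ so that $(\lambda_1G)|_{\mathbb{S}^1}$ is quasisymmetrically conjugate to an irrational rotation but \emph{not} $C^2$ conjugate; if the Siegel disk of the surgered map strictly contained $\psi(\ud)$, then $\lambda_1G|_{\mathbb{S}^1}$ would be analytically conjugate to the rotation, a contradiction. (Yoccoz's theorem for critical circle maps, which you invoke, has no analogue here precisely because there is no critical point.) Your final paragraph is essentially fine: once boundedness and the quasicircle boundary are established, $\partial\Omega$ misses $\sing(f_\lambda^{-1})=\{\pm i\lambda\}$ because the asymptotic values are omitted, hence lie outside the closed Siegel disk; but this is downstream of the two gaps above.
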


\section{Unbounded Siegel disks}

In this section, we prove 
that the boundary of the image of an unbounded Siegel disk for $f_{\lambda}(z)=\lambda\tan z$ must intersect with the singular set. We will use the notations $D(a,r)$ (resp. $\overline{D}(a,r)$) for the open (resp. closed) euclidean disk of radius $r$ around $a$. Let $\ud_r:=D(0,r)$ and write $\ud=\ud_1$ for simplicity. For $\delta\in\mathbb{R}$, put $\h^\delta:=\{z=x+iy: y>\delta\}$ and  $\h_\delta:=\{z=x+iy: y<\delta\}$. Moreover, for $R>0$, denote $S_R:=\mathbb{C}\setminus\overline{\h^R\cup\h_{-R}}$ the horizontal strip of width $2R$ symmetric with respect to the real axis.

First we prove the following basic result for $f_{\lambda}$. 

\begin{proposition}\label{2.1}
Let $W$ be a domain in $\c$. If $\overline{f_{\lambda}(W)}\,\cap\,\sing(f_{\lambda}^{-1})=\emptyset$, then $W$ is contained in $S_R$ for some $R>0$.
\end{proposition}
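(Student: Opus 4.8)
The plan is to exploit the fact that the two asymptotic values of $f_\lambda$ are $\pm\lambda i$, and that $f_\lambda$ maps horizontal strips in a controlled way. First I would observe that the hypothesis $\overline{f_\lambda(W)}\cap\sing(f_\lambda^{-1})=\emptyset$ means in particular that $f_\lambda(W)$ stays a positive distance away from $\{\lambda i,-\lambda i\}$; say there is $\varepsilon>0$ with $f_\lambda(W)\cap\bigl(D(\lambda i,\varepsilon)\cup D(-\lambda i,\varepsilon)\bigr)=\emptyset$. Equivalently, $W$ is contained in $f_\lambda^{-1}\bigl(\hc\setminus(\overline{D}(\lambda i,\varepsilon)\cup\overline{D}(-\lambda i,\varepsilon))\bigr)$. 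So the whole problem reduces to showing that this preimage set is contained in some horizontal strip $S_R$.

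Next I would compute this preimage explicitly. Writing $w=f_\lambda(z)=\lambda\tan z$, recall $\tan z=-i\frac{e^{2iz}-1}{e^{2iz}+1}$, so if $z=x+iy$ with $y\to+\infty$ then $e^{2iz}\to 0$ and $\tan z\to i$, hence $f_\lambda(z)\to\lambda i$; symmetrically $f_\lambda(z)\to-\lambda i$ as $y\to-\infty$. More quantitatively, I would show that there exists $R=R(\varepsilon,\lambda)>0$ such that $|f_\lambda(z)-\lambda i|<\varepsilon$ whenever $\operatorname{Im}z>R$ and $|f_\lambda(z)+\lambda i|<\varepsilon$ whenever $\operatorname{Im}z<-R$; this is a direct estimate using $|e^{2iz}|=e^{-2y}$ and the formula for $\tan$, valid uniformly in $x$ since $|e^{2iz}+1|\ge 1-e^{-2y}$ is bounded below once $y$ is large. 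Consequently any $z$ with $|\operatorname{Im}z|>R$ has $f_\lambda(z)$ inside one of the two forbidden disks, so $z\notin W$. This gives $W\subset S_R$.

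The main obstacle is making the estimate in the previous paragraph genuinely uniform in the real part $x$ and turning ``$f_\lambda(z)\to\pm\lambda i$ as $\operatorname{Im}z\to\pm\infty$'' into an explicit threshold $R$; once that uniform statement is in hand the argument is immediate. A secondary point to handle cleanly is the role of $\infty$ as a potential singular value: since $f_\lambda$ is not defined at the poles $\pi/2+k\pi$ and one might worry about $z$ near a pole, note that poles lie on the real axis so they do not affect the conclusion $W\subset S_R$; and the hypothesis already rules out $\infty\in\overline{f_\lambda(W)}$, though in fact we only need the two finite asymptotic values. I would therefore state the key lemma as: for every $\varepsilon>0$ there is $R>0$ with $f_\lambda\bigl(\{|\operatorname{Im}z|>R\}\bigr)\subset D(\lambda i,\varepsilon)\cup D(-\lambda i,\varepsilon)$, prove it by the $e^{2iz}$ estimate, and then deduce the proposition in one line.
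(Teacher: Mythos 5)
Your proposal is correct and follows essentially the same route as the paper: both reduce the statement to the fact that the preimage under $f_\lambda$ of a small disk around $\pm\lambda i$ contains (in the paper, is exactly) an upper or lower half-plane, so that $W$ must lie in the complementary strip $S_R$. The only difference is cosmetic: you establish this by a direct uniform estimate on $\tan z=-i\frac{e^{2iz}-1}{e^{2iz}+1}$, whereas the paper reads it off from the decomposition $f_\lambda=M_1\circ\exp\circ M_2$ with $M_1$ M\"obius and $M_2(z)=2iz$.
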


\begin{proof}
By assumption, for some $r>0$ we can choose two disks $D_1:=D(i\lambda, r)$ and $D_2:=D(-i\lambda, r)$ with disjoint closures such that $D_k \cap \overline{f_{\lambda}(W)}=\emptyset$ for $k=1,2$. It is clear that for each $k$, the domain $f_{\lambda}^{-1}(D_k)$ is an upper or lower half plane. This can be seen by considering $f_{\lambda}=M_1\circ\exp\circ M_2$, where $M_1(z)=-\lambda i(z-1)/(z+1)$ and $M_2(z)=2iz$. Therefore, there exists $R>0$ (depending only on $r$) such that $f_{\lambda}\left(\overline{\h^R\cup \h_{-R}}\right)=\overline{D_1\cup D_2}$. In particular, this implies that $W\subset S_R$.
\end{proof}

Now we prove Theorem \ref{thm:unbounded}.

\begin{proof}[Proof of Theorem \ref{thm:unbounded}]
As mentioned in Section \ref{intro}, it suffices to show that if $\Omega$ is unbounded, then $\partial f_\lambda(\Omega)\cap\sing(f_\lambda^{-1})\neq\emptyset$. Suppose to the contrary that $\partial f_\lambda(\Omega)\cap\sing(f_\lambda^{-1})=\emptyset$. Note that $\sing(f_{\lambda}^{-1})=\{i\lambda,-i\lambda\}$. Fix $r>0$ sufficiently small and set
$$D_1:=D(i\lambda, r),\,~\, ~D_2:=D(-i\lambda, r)$$
such that $\overline{D}_1\cap\overline{D}_2=\emptyset$ and $f_\lambda\left(\Omega\right)\cap(\overline{D}_1\cup\overline{D}_2)=\emptyset$.
Define
$$V:=\hc\setminus(\overline{D_1\cup D_2})~\,~\text{and}~\,~U:=f_{\lambda}^{-1}(V).$$
Since $V\cap\sing(f_{\lambda}^{-1})=\emptyset$ , the map $f_\lambda: U\to V$ is a covering map. From the proof of Proposition \ref{2.1}, there exists $R>0$ such that $U=S_R$.

Let $\phi:\mathbb{D}\to\Omega$ and $\psi:\mathbb{D}\to f_\lambda(\Omega)$ be Riemann maps such that $\phi(0)=o$ and $\psi(0)=f_\lambda(o)$,
where $o$ is the center of $\Omega$. It follows that $\psi^{-1}\circ f_\lambda\circ\phi(z)$ is an automorphism of $\mathbb{D}$ fixing $0$. Hence there exists $\rho\in\mathbb{C}$ with $|\rho|=1$ such that
 $$\psi^{-1}\circ f_\lambda\circ\phi(z)=\rho z.$$
 Pick $0<r_1<1$ and consider $W^{(1)}:=\psi(\mathbb{D}_{r_1})$. It follows that  $W^{(1)}\subset f_\lambda(\Omega)$ and  $\partial W^{(1)}$ is a Jordan curve. Let $a_1\in \partial D_1$ and $a_2\in\partial D_2$ be two points. Then there exists a simple curve $\Gamma_1\subset\mathbb{C}\setminus(D_1\cup D_2)$ connecting $a_1$ and $a_2$ such that $\Gamma_1\cap W^{(1)}\not=\emptyset$ and $\Gamma_1\cap\partial W^{(1)}$ contains exactly two points. Indeed, to obtain such a $\Gamma_1$, we take the image of a diameter line in $\overline{\mathbb{D}}_{r_1}$ under $\psi$, and then continue it in $\mathbb{C}\setminus(D_1\cup D_2\cup\overline{W^{(1)}})$ until reaching $a_1$ and $a_2$. See Figure \ref{cc}.
 
Now consider the preimage $f_\lambda^{-1}(\Gamma_1)$. There exists a bounded simple curve $\alpha_0$ connecting both boundaries of $S_R$ such that $f_\lambda^{-1}(\Gamma_1)=\cup_{j\in\mathbb{Z}}\alpha_j$  with $\alpha_j=\alpha_0+j\pi$. It follows that $\alpha_j\cap\alpha_\ell=\emptyset$ for $j\not=\ell$, and hence $f_\lambda$ is injective on $\alpha_j$.
Set  $\Omega^{(1)}:=\phi(\mathbb{D}_{r_1})$. Then $W^{(1)}=f_\lambda(\Omega^{(1)})$ and $\Omega^{(1)}\cap f_\lambda^{-1}(\Gamma_1)\not=\emptyset$. Without loss of generality, we can assume $\Omega^{(1)}\cap\alpha_0\not=\emptyset$. By the choice of $\Gamma_1$, we have $f_\lambda(\Omega^{(1)}\cap\alpha_0)=W^{(1)}\cap\Gamma_1$. Then $\Omega^{(1)}\cap\alpha_j=\emptyset$ for all $j\in\mathbb{Z}\setminus\{0\}$ since $f_\lambda$ is injective on $\Omega$.

Since $\Omega$ is unbounded, there exists $r_1<r_2<1$ such that
$$\left(\Omega^{(2)}\cap\alpha_2\right)\bigcup\left(\Omega^{(2)}\cap\alpha_{-2}\right)\not=\emptyset,$$
where $\Omega^{(2)}:=\phi(\mathbb{D}_{r_2})$. Without loss of generality, we can assume $\Omega^{(2)}\cap\alpha_2\not=\emptyset$. It follows that $\Omega^{(2)}\cap\alpha_1\not=\emptyset$. Set $W^{(2)}:=\psi(\mathbb{D}_{r_2})$. Note that $W^{(1)}\subset W^{(2)}\subset f_\lambda(\Omega)$ and  $\partial W^{(2)}$ is a Jordan curve. We can choose a simple curve $\Gamma_2\subset\mathbb{C}\setminus(D_1\cup D_2)$ connecting $a_1$ and $a_2$ such that $\Gamma_2\cap\partial W^{(2)}$ contains exactly two points and 
$$\Gamma_1\cap\overline{W^{(1)}}=\Gamma_2\cap\overline{W^{(1)}}=\left(\Gamma_1\cap\Gamma_2\right)\setminus\{a_1,a_2\}.$$ 
Indeed, to obtain such a $\Gamma_2$, we continue $\Gamma_1\cap\overline{W^{(1)}}$ in $\overline{W^{(2)}}\setminus\overline{W^{(1)}}$ until reaching points in $\partial W^{(2)}$, and then continue it in $\mathbb{C}\setminus(D_1\cup D_2\cup \overline{W^{(2)}})$ until reaching $a_1$ and $a_2$.

For the preimage $f_\lambda^{-1}(\Gamma_2)$, there exists a bounded simple curve $\beta_0$ connecting both boundaries of $S_R$ such that $\alpha_0\cap\Omega^{(1)}=\beta_0\cap\Omega^{(1)}$ and $f_\lambda^{-1}(\Gamma_2)=\cup_{j\in\mathbb{Z}}\beta_j$ with $\beta_j=\beta_0+j\pi$. Then $\alpha_j\cap\beta_j\not=\emptyset$ and $\beta_j\cap\beta_\ell=\emptyset$ for $j\not=\ell$. Moreover,  $f_\lambda$ is injective on $\beta_j$. 

We claim that $\beta_j\cap\alpha_\ell\cap S_R=\emptyset$ if $j\not=\ell$. It suffices to show the claim holds for $\ell=0$ since $\alpha_\ell=\alpha_0+\ell\pi$. Suppose that there exists $j_0\in\mathbb{Z}\setminus\{0\}$ such that $\beta_{j_0}\cap\alpha_0\cap S_R\not=\emptyset$. It follows that $(\beta_{j_0}\setminus\alpha_{j_0})\cap\alpha_0\cap S_R\not=\emptyset$ since $\alpha_{j_0}\cap\alpha_0=\emptyset$. Pick $z\in(\beta_{j_0}\setminus\alpha_{j_0})\cap\alpha_0\cap S_R$ and note that for all $j\in\mathbb{Z}$,
$$f_\lambda\left(\alpha_j\cap\beta_j\cap S_R\right)=f_\lambda\left(\alpha_0\cap\beta_0\cap S_R\right)=\Gamma_1\cap\overline{W^{(1)}}.$$ 
We have that $f_\lambda(z)\in(\Gamma_2\setminus\Gamma_1)\cap\Gamma_1$. It is impossible. Thus the claim holds. 

To finish the proof, we can derive a contradiction by locating $\beta_1$. 
The above claim implies that $\beta_1\cap S_R$ is contained in the interior of the quadrilateral bounded by the $\alpha_0,\alpha_2$ and the two boundaries of $S_R$. The choice of $r_2$ and the properties of $\beta_1$ guarantee that $\beta_1\cap\Omega^{(2)}\not=\emptyset$. However, note that $f_\lambda(\beta_0\cap\Omega^{(2)})=\Gamma_2\cap W^{(2)}$ by the choice of $\Gamma_2$. Since $f_\lambda$ is injective on $\Omega$, we have $\beta_1\cap\Omega^{(2)}=\emptyset$. It is a contradiction.
\end{proof}

\begin{figure}[htbp] 
    \centering
     \includegraphics[width=14.5cm]{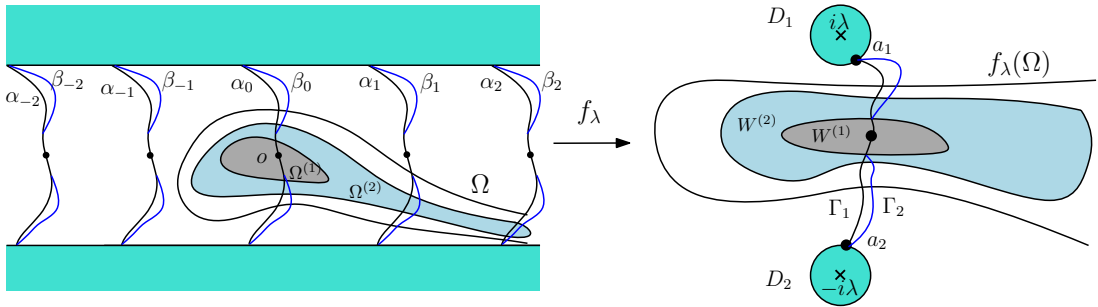}
     \caption{Construction of $\Gamma_1$ and $\Gamma_2$ and their preimages. As shown, $\Gamma_1$ (black solid curve) is a simple curve connecting $a_1$ and $a_2$ while $\Gamma_2$ consists of the part of $\Gamma_1$ in $W^{(1)}$ and two blue simple curves.}
     \label{cc}
\end{figure}

\begin{remark}
In principle, the only requirements of $\Omega$ in the proof of Theorem \ref{thm:unbounded} are that $\Omega$ is simply connected and $f$ is univalent in $\Omega$. In particular, if  $\Omega$ is a simply connected Fatou component of $f_\lambda$ in which $f$ is univalent, Theorem \ref{thm:unbounded} holds. 
\end{remark}


Our next result concerns the form of transcendental meromorphic functions with exactly two singular values, which, as mentioned in Section \ref{intro}, is used to prove Theorem \ref{Thm2}. It seems to be classical but is not easy to locate a reference.  The idea of our proof is suggested by Walter Bergweiler. For a slightly different proof, see \cite[Theorem 2.1]{ac20}.
\begin{proposition}\label{form}
Let $f:\c\to\hc$ be transcendental and meromorphic with exactly two singular values. Then $f$ is of the form $M\circ\exp\circ A$, where $M$ is M\"obius and $A$ is linear.
\end{proposition}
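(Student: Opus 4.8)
The plan is to first normalize the two singular values by postcomposing with a M\"obius transformation, and then exploit the fact that $f$ is a covering map over the complement of $\sing(f^{-1})$. Write $\sing(f^{-1})=\{a,b\}\subset\hc$ and choose a M\"obius map $M$ with $M(0)=a$ and $M(\infty)=b$. Since $\sing((M^{-1}\circ f)^{-1})=M^{-1}(\sing(f^{-1}))=\{0,\infty\}$, it suffices to prove the statement when $\sing(f^{-1})=\{0,\infty\}$, in which case the goal is to show $f=\exp\circ A$ for a linear map $A$.

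Next I would set $U:=f^{-1}(\c\setminus\{0\})=\c\setminus f^{-1}(\{0,\infty\})$. The set $f^{-1}(\{0,\infty\})$, consisting of the zeros and poles of $f$, is closed and discrete in $\c$, so $U$ is a connected domain; it is nonempty since $f$ is nonconstant, and $f$ omits only values in $\{0,\infty\}$ (any omitted value is singular), so $f\colon U\to\c\setminus\{0\}$ is a surjective covering map, exactly as in the proof of Proposition \ref{2.1}. Because the connected covering spaces of $\c\setminus\{0\}$ correspond to the subgroups of $\pi_1(\c\setminus\{0\})\cong\mathbb{Z}$, the Riemann surface $U$ is conformally equivalent either to $\c$ or to $\c\setminus\{0\}$.

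I would then eliminate the case $U\cong\c\setminus\{0\}$. If it held, $U$ would omit exactly one point $p$ of $\c$ (since $\c$ with two or more points removed is not conformally $\c\setminus\{0\}$), the covering $f|_U$ would have finite degree $n$, and $f$ would have a zero or a pole at $p$ and no other zeros or poles. Replacing $f$ by $1/f$ if necessary, we may assume $f$ has a pole at $p$ and no zeros; then $1/f$ is entire, vanishes only at $p$, and is a proper holomorphic map of $\c$ onto $\c$, hence a polynomial, so $f$ is rational, contradicting the transcendence of $f$. Therefore $U\cong\c$; and since $\c$ with a nonempty discrete set removed is never conformally $\c$, this forces $U=\c$, i.e.\ $f$ is entire and nowhere zero.

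Finally, $f\colon\c\to\c\setminus\{0\}$ is a covering map from a simply connected surface, hence a universal covering of $\c\setminus\{0\}$; since $\exp$ is another one, $f=\exp\circ\psi$ for some homeomorphism $\psi\colon\c\to\c$, and $\psi$ is holomorphic (locally a branch of $\log\circ f$), hence a conformal automorphism of $\c$, hence of the form $\psi(z)=az+b$. Thus $f=\exp\circ A$ with $A$ linear, and undoing the normalization gives $f=M\circ\exp\circ A$. I expect the step requiring the most care to be the elimination of $U\cong\c\setminus\{0\}$: one has to see that this alternative genuinely forces $f$ (or $1/f$) to extend to a rational map and so clash with transcendence, through the ``properness implies polynomial'' observation.
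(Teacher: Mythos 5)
Your argument is correct, but it follows a genuinely different route from the paper's. The paper lifts the exponential \emph{through} $f$: after normalizing $\sing(f^{-1})=\{0,\infty\}$, it continues $f^{-1}\circ\exp$ analytically over the simply connected plane to obtain an entire $h$ with $\exp=f\circ h$, uses Picard's theorem on the omitted values of $h$ to show that $f$ is entire, and then combines the Denjoy--Carleman--Ahlfors theorem with P\'olya's theorem on the order of a composition to force $h$ to be a polynomial, hence (by local univalence of $\exp$) linear. You instead treat $f$ directly as a covering of $\c\setminus\{0\}$ and invoke the classification of connected coverings of the punctured plane: the total space is conformally $\c$ or $\c\setminus\{0\}$, the latter is excluded because it would make $f$ rational, and the former makes $f$ a universal covering of $\c\setminus\{0\}$, hence $\exp$ precomposed with an affine automorphism of $\c$. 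The two proofs are in a sense dual --- the paper factors $\exp=f\circ h$ and shows $h$ is affine, you factor $f=\exp\circ\psi$ and show $\psi$ is affine --- but yours is purely topological and dispenses with all the value-distribution input (order estimates, Denjoy--Carleman--Ahlfors, P\'olya), at the cost of needing the covering-space classification, which the paper never uses. The one step you should spell out is the properness of $1/f$ in the excluded case: from the finite degree $n$ of the covering, the preimage of each compact annulus around $0$ is compact, and a connectivity argument near $\infty$ (together with Liouville, to rule out $1/f\to 0$ there) yields $|1/f(z)|\to\infty$ as $z\to\infty$; alternatively, observe that $1/f$ takes every nonzero value exactly $n$ times and apply Picard to conclude it is a polynomial. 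Either way the contradiction with transcendence stands, so the proof is complete.
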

\begin{proof}
Post-composing a M\"obius transformation, we can assume that the singular values of $f$ are at $0$ and $\infty$. Then $f^{-1}(\exp)$ can be continued analytically in the whole plane and thus equals to some entire function $h$. It follows that $\exp=f\circ h$. Thus, to show the existence of $A$, it suffices to prove that $h$ is linear.

Note that $\mathrm{Sing}(\exp^{-1})=\{0,\infty\}$, and both $0$ and $\infty$ are asymptotic values of $\exp$. Then $f$ has at least two asymptotic values at $0$ and $\infty$. Since $f$ has exactly two singular values, it follows that $\mathrm{Sing}(f^{-1})=\{0,\infty\}$. Note that $h$ is non-constant. By the little Picard theorem, the map $h$ omits at most one point in $\mathbb{C}$. If $h$ has no omitted values (i.e., $h$ is onto), then $f$ will omit $0$ and $\infty$ as $\exp$ does. If $h$ has exactly one omitted value, say $w_0$. Then $f(w_0)$ is equal to either $0$ or $\infty$, since otherwise $f\circ h$, i.e., $\exp$, will have a third singular value. 
 By considering $1/f$ if necessary, 
 we may assume that $f(w_0)=0$. Then $\infty$ will be omitted by $f$. This is because any pole of $f$ will otherwise have a preimage under $h$, which is impossible since $\exp$ omits $\infty$.
It follows that $f$ is an entire function. By the Denjoy-Carleman-Ahlfors theorem, the order of $f$ is at least $1/2$. So $h$ is a polynomial; for, otherwise, a result of P\'olya \cite{polya}, which states that for two entire functions $f_1$ and $f_2$, the composition $f_1\circ f_2$ has infinity order unless $f_1$ is of finite order and $f_2$ is a polynomial or $f_1$ has order zero and $f_2$ is of finite order, implies that $f\circ h$ will have infinity order,  while $\exp$ has order $1$. Since $\exp$ is locally univalent in the whole plane, we in fact have that $h$ is linear. This completes the proof.
\end{proof}


\begin{remark}
Proposition \ref{form} implies that 
there does not exist a meromorphic function with exactly one critical value and one asymptotic value. 
\end{remark}

\section{Bounded Siegel disks}
In this section, we prove Theorem \ref{thm:bounded}. It is a standard application of quasiconformal surgery. For similar applications, we refer to \cite{Rempe03} for the exponential family and \cite{Zhang08} for the
sine family.

\begin{proof}[Proof of Theorem \ref{thm:bounded}]
Let $\theta_0$ be an irrational number of bounded type and set $\lambda_0=e^{2\pi i\theta_0}$. Then $f_{\lambda_0}$ has an (unbounded) Siegel disk $\Omega_0$ around $0$. On $\Omega_0$, the map $f_{\lambda_0}$ is conjugate to an irrational rotation by a biholomorphic map $\varphi:\mathbb{D}\to\Omega_0$. Since $f_{\lambda_0}(-z)=-f_{\lambda_0}(z)$, the Siegel disk $\Omega_0$ is symmetric about $0$ and hence $\varphi$ is odd.  Fix some $0<r<1$ and set $K=\varphi(\overline{\mathbb{D}}_r)$. It follows that $K$ is symmetric about $0$.


Consider the Riemann map $\Phi:\widehat{\mathbb{C}}\setminus\overline{\mathbb{D}}\to\widehat{\mathbb{C}}\setminus K$ fixing $\infty$. Note that $\partial K=\varphi(\partial\mathbb{D}_r)$. Then $\Phi$ extends to a $C^\infty$ map on $\widehat{\mathbb{C}}\setminus\mathbb{D}$, see \cite[Theorem A]{Bell87}. Moreover, the map $\Phi$ is odd since $K$ is symmetric about $0$. Furthermore, the map $\Phi$ extends to an odd quasiconformal map on $\widehat{\mathbb{C}}$,  see \cite[Proposition 2.30]{Branner14}, which we still call the extension $\Phi$. On $\mathbb{C}$, define
$$G(z):=\Phi^{-1}\circ f_{\lambda_0}\circ\Phi(z).$$
It follows that $G|_{\mathbb{S}^1}$ is a $C^\infty$ circle diffeomorphism. By a result of Herman \cite{Herman86}, also see \cite[Theorem 3.21]{Branner14}, there exists $\lambda_1\in\mathbb{S}^1$ such that $(\lambda_1G)|_{\mathbb{S}^1}$ is quasisymmetrically conjugate to an irrational rigid rotation $R_\theta$, but not $C^2$ conjugate. Denote this quasisymmetric conjugacy by $h$ and post-compose with a rotation such that $h(1)=1$. Following from the proof of \cite[Lemma 2.6]{Zhang08}, we have that the map $h$ is odd. Let $H:\overline{\mathbb{D}}\to\overline{\mathbb{D}}$ be the Douady-Earle extension of $h$, see \cite{Douady86}. Then $H$ is also odd.
Define
$$g(z):=
\begin{cases}
\lambda_1\,G(z), &\text{~if}\  z\in\mathbb{C}\setminus\mathbb{D},\\
H^{-1}\circ R_\theta\circ H, &\text{~if}\  z\in\mathbb{D}.
\end{cases}
$$

Now we pull back the standard complex structure by $H$ and obtain a complex structure in $\mathbb{D}$ denoted by $\nu_0$. Let $\nu$ be the complex structure on $\mathbb{C}$ defined as follows. For $z\in\mathbb{C}$, if the forward $g$-orbit of $z$ intersects with $\mathbb{D}$, then $\nu(z)$ is the pull back of $\nu_0$ along the orbit. Otherwise, put $\nu(z)=0$. Then $\nu$ is $g$-invariant and $\nu(-z)=\nu(z)$. We claim that $||\nu|
|_\infty$ is bounded. Indeed, note that $g(\mathbb{S}^1)=\mathbb{S}^1$. Then any point $z\in\mathbb{C}$ whose $g$-orbit intersects with $\mathbb{D}$ is either in $\mathbb{D}$ or in $\mathbb{C}\setminus\overline{\mathbb{D}}$. From the construction and the fact that $g$ is holomorphic on $\mathbb{C}\setminus\overline{\mathbb{D}}$, we know that $|\nu|$ only depends on $H$ and hence has a uniform upper bound on $\mathbb{C}$. Thus the claim holds.

Let $\psi$ be a quasiconformal homeomorphism of the Riemann sphere which solves the Beltrami equation given by $\nu$ and fixes $0$ and $\infty$, see \cite{Ahlfors06}. It follows that $\psi$ is odd, see \cite[Lemma 2.10]{Zhang08}. Now set $f=\psi\circ g\circ\psi^{-1}$. Then $f$ is a transcendental meromorphic function having a Siegel disk $\Omega_1$ centred at $0$ and $\psi(\mathbb{D})\subset\Omega_1$. We claim that $\psi(\mathbb{D})=\Omega_1$, and hence $\partial\Omega_1=\partial\psi(\mathbb{D})$ is a quasicircle and contains no asymptotic values. Suppose $\psi(\mathbb{D})\subsetneq\Omega_1$. Then the map $H\circ\psi^{-1}$ extends to a conformal map on $\Omega_1$. Since $\psi(\mathbb{S}^1)\subset\Omega_1$, it follows that $(\psi\circ\lambda_1 G\circ\psi^{-1})|_{\psi(\mathbb{S}^1)}$ is analytically conjugate to $R_\theta$. By the definition of $\psi$, we know that $\psi$ is analytic on $\mathbb{S}^1$. Hence $\lambda_1G|_{\mathbb{S}^1}$ is analytically conjugate to $R_\theta$. This contradicts with the choice of $\lambda_1$.

Now we claim that there exists $a,b\in\mathbb{C}\setminus\{0\}$ such that $f(z)=b\tan{az}$. Note that from the construction, the map $f$ is a meromorphic function with exactly two singular values such that $f(-z)=-f(z)$ and $f(0)=0$. Then there exist a M\"obius transformation $M$ and a linear map $A$ such that $f(z)=M\circ \tan \circ A(z)$. Note that $e^{\alpha z+\beta} =e^\beta e^{\alpha z}$ for $\alpha\in\mathbb{C}\setminus\{0\}$ and $\beta\in\mathbb{C}$. We can assume $A(z)=az$ for some $a\in\mathbb{C}\setminus\{0\}$. Since $f(-z)=-f(z)$, we have $M(z)$ is odd. It follows that either $M(z)=bz$ or $M(z)=b/z$ for some $b\in\mathbb{C}\setminus\{0\}$.  Since $f(0)=0$, we have in fact $M(z)=bz$. The claim holds. 

Since $a\not=0$, set $L(z)=z/a$ and $F(z)=L^{-1}\circ f\circ L(z)$. Then $F(z)=ab\tan z$. Note that $L(0)=0$. The map $F$ has a Siegel disk $\Omega:=L^{-1}\circ\psi(\mathbb{D})$ around $0$. Moreover, the multiplier of $F$ at the fixed point $0$ is $ab$. It follows that $ab\in\mathbb{S}^1$. Set $\lambda:=ab$. Then $f_\lambda:=F$ is a desired map.
\end{proof}

\begin{ack}
The authors thank Weixiao Shen, Gaofei Zhang and Jianhua Zheng for useful discussion and helpful comments. The authors also thank the referees for careful reading and helpful suggestions. The first author was partially supported by the China Postdoctoral Science Foundation (No. 2019M651329). The second author was partially supported by ISF Grant 1226/17.
\end{ack}

\bibliographystyle{siam}

\bigskip
\noindent Weiwei Cui:

\emph{Shanghai Center for Mathematical Sciences,  Fudan University,  2005 Songhu Road, Shanghai 200438,  China}; and

\emph{Centre for Mathematical Sciences, Lund University, Box 118, 22 100 Lund, Sweden}

\medskip
\emph{cuiweiwei@fudan.edu.cn}

\bigskip
\noindent Hongming Nie:

\emph{ Einstein Institute of Mathematics, The Hebrew University of Jerusalem, Givat Ram. Jerusalem, 9190401, Israel}

\medskip
\emph{hongming.nie@mail.huji.ac.il}

\end{document}